%
%
%

\documentclass[graybox]{svmult}


\usepackage{type1cm}        
%
\usepackage{makeidx}         
\usepackage{graphicx}        
\usepackage{multicol}        
\usepackage[bottom]{footmisc}

\usepackage{newtxtext}       %
\usepackage{newtxmath}       


\makeindex             


\begin{document}

\title*{Lower bounds for the number of random bits in Monte Carlo algorithms}
\author{Stefan Heinrich}
\institute{ \at Department of Computer Science,
	University of Kaiserslautern,
	D-67653 Kaiserslautern, Germany
	\email{heinrich@informatik.uni-kl.de}}
%
%
\maketitle

\abstract*{We continue the study of restricted Monte Carlo algorithms in a general setting. Here we show a lower bound  for minimal errors in the setting with finite restriction in terms of deterministic minimal errors. This generalizes a result of Heinrich, Novak, and Pfeiffer, 2004 to the adaptive setting. As a consequence, the lower bounds on the number of random bits from that paper also hold in this setting. We also derive a lower bound on the number of needed bits for integration of Lipschitz functions over the Wiener space, complementing a result of  Giles, Hefter, Mayer, and  Ritter, 2019. }

\abstract{We continue the study of restricted Monte Carlo algorithms in a general setting. Here we show a lower bound  for minimal errors in the setting with finite restriction in terms of deterministic minimal errors. This generalizes a result of \cite{HNP04} to the adaptive setting. As a consequence, the lower bounds on the number of random bits from \cite{HNP04} also hold in this setting. We also derive a lower bound on the number of needed bits for integration of Lipschitz functions over the Wiener space, complementing a result of  \cite{GHMR19b}. }

\section{Introduction}\label{sec:1}
Restricted Monte Carlo algorithms were considered in  \cite{Nov85,Nov88,TW92,HNP04,NP04,GYW06,YH08,GHMR19a,GHMR19b, GHMR20}. Restriction usually means that the algorithm has access only to random bits or to random variables with finite range.
Most of these papers on restricted randomized algorithms consider the non-adaptive case. Only \cite{GHMR19b} includes adaptivity, but considers a  class of algorithms where each information call is followed by one random bit call.

A general definition restricted Monte Carlo algorithms was given in \cite{Hei20}. It extends the previous notions in two ways: Firstly, it includes full adaptivity, and secondly, it includes models in which the algorithms have access to an arbitrary, but fixed set of random variables, for example, uniform distributions on $[0,1]$.
In \cite{Hei20} the relation of restricted to unrestricted randomized algorithms was studied. In particular, it was shown that for each such restricted setting there is a computational problem that can be solved in the unrestricted randomized setting but not under the restriction. 

The aim of the present paper is to continue the study of the restricted setting. The main result is a lower bound  for minimal errors in the setting with a finite restriction in terms of deterministic minimal errors. This generalizes a corresponding result from \cite{HNP04}, see Proposition\ 1 there, to the adaptive setting with arbitrary finite restriction. The formal proof in this setting is technically more involved. As a consequence 
the lower bounds on the number of random bits from \cite{HNP04} also hold in this setting. Another corollary concerns integration of Lipschitz functions over the Wiener space \cite{GHMR19b}. It shows that the number of random bits used in the algorithm from \cite{GHMR19b} is optimal, up to logarithmic factors. 

\section{Restricted randomized algorithms in a general setting} 
\label{sec:2}
We work in the framework of information-based complexity theory (IBC) \cite{Nov88,TWW88}, using specifically the general approach from \cite{Hei05a, Hei05b}. We recall the notion of a restricted randomized algorithm as recently introduced in \cite{Hei20}. This section is kept general, for specific examples illustrating this setup we refer to the integration problem considered in \cite{Hei20} as well as to the problems studied in Section \ref{sec:4}.

We consider an abstract  numerical problem 
\begin{equation}
\label{I4}
\mathcal{P}=(F,G,S,K,\Lambda),
\end{equation}
where $F$ and $K$ are a non-empty sets, 
$G$ is a Banach space,  $S$ a mapping from $F$ to $G$, and $\Lambda$ a nonempty set of mappings from $F$ to $K$.  The operator $S$ is understood to be the  solution operator that sends the input  $f\in F$ to the exact solution $S(f)$ and $\Lambda$ is the set of information functionals about the input $f\in F$ that can be exploited by an algorithm. 

A probability space with  access restriction is a tuple 
\begin{equation}
\label{B7}
\mathcal{R}=\big((\Omega,\Sigma,{\mathbb P}),K',\Lambda'\big),
\end{equation}
with $(\Omega,\Sigma,{\mathbb P})$  a probability space, $K'$ a non-empty set, and $\Lambda'$ a non-empty set of mappings from $\Omega$ to $K'$. Define
$$
\bar{K}=K\dot{\cup} K', \quad\bar{\Lambda}=\Lambda\dot{\cup}\Lambda',
$$ 
where $\dot{\cup}$ is the disjoint union, and for $\lambda\in\bar{\Lambda}$, $f\in F$, $\omega\in\Omega$ we set 
$$
\lambda(f,\omega)=\left\{\begin{array}{lll}
&\lambda(f) &\mbox{if} \quad \lambda\in\Lambda   \\
& \lambda(\omega) &\mbox{if} \quad \lambda\in\Lambda'.  \\
\end{array}
\right. 
$$
An $\mathcal{R}$-restricted  randomized algorithm for problem $\mathcal{P}$ is a tuple
$$
A=((L_i)_{i=1}^\infty, (\tau_i)_{i=0}^\infty,(\varphi_i)_{i=0}^\infty)
$$
such that 
$L_1\in\bar{\Lambda}$, $\tau_0\in\{0,1\}$, $\varphi_0\in G$,
and for $i\in {\mathbb{N}}$
\begin{equation*}
L_{i+1} : \bar{K}^i\to \bar{\Lambda},\quad
\tau_i:  \bar{K}^i\to \{0,1\},\quad
\varphi_i:  \bar{K}^i\to G 
\end{equation*}
are any mappings.
Given $f\in F$ and $\omega\in\Omega$, we define $(\lambda_i)_{i=1}^\infty$ with $\lambda_i\in \bar{\Lambda}$ 
as follows:
\begin{eqnarray}
\lambda_1=L_1, \quad
\lambda_i=L_i(\lambda_1(f,\omega),\dots,\lambda_{i-1}(f,\omega))\quad(i\ge 2)
\label{A1}.
\end{eqnarray}
If $\tau_0=1$, we define 
$$
{\rm card}_{\bar{\Lambda}}(A,f,\omega)={\rm card}_\Lambda(A,f,\omega)={\rm card}_{\Lambda'}(A,f,\omega)=0.
$$ 
If $\tau_0=0$, let ${\rm card}_{\bar{\Lambda}}(A,f,\omega)$ be
the first integer $n\ge 1$ with 
$$
\tau_n(\lambda_1(f,\omega),\dots,\lambda_n(f,\omega))=1, 
$$
if there is such an $n$. If $\tau_0=0$ and no such $n\in {\mathbb{N}}$ exists, 
put ${\rm card}_{\bar{\Lambda}}(A,f,\omega)=\infty$. 
Furthermore, set
\begin{eqnarray*}
	{\rm card}_\Lambda(A,f,\omega)&=&|\{k\le {\rm card}_{\bar{\Lambda}}(A,f,\omega):\lambda_k\in \Lambda\}|
	\\
	{\rm card}_{\Lambda'}(A,f,\omega)&=&|\{k\le {\rm card}_{\bar{\Lambda}}(A,f,\omega):\lambda_k\in \Lambda'\}|.
\end{eqnarray*}
We have
${\rm card}_{\bar{\Lambda}}(A,f,\omega)={\rm card}_\Lambda(A,f,\omega)+{\rm card}_{\Lambda'}(A,f,\omega) $. The output $A(f,\omega)$ of algorithm $A$ at input $(f,\omega)$ is defined as
\begin{equation}
\label{A3}
A(f,\omega)=\left\{\begin{array}{lll}
\varphi_0  & \quad\mbox{if} \quad {\rm card}_{\bar{\Lambda}}(A,f,\omega)\in \{0,\infty\} \\[.2cm]
\varphi_n(\lambda_1(f,\omega),\dots,\lambda_n(f,\omega))  &\quad\mbox{if} \quad 1\le {\rm card}_{\bar{\Lambda}}(A,f,\omega)=n<\infty. 
\end{array}
\right.
\end{equation}
Thus, a restricted randomized algorithm  can access the randomness of $(\Omega,\Sigma,\mathbb{P})$ only through the functionals $\lambda(\omega)$ for $\lambda\in \Lambda'$. 

The set of all $\mathcal R$-restricted randomized algorithms for $\mathcal P$ is denoted by $\mathcal{A}^{\rm ran}(\mathcal{P},\mathcal{R})$.
Let $\mathcal{A}_{\rm meas}^{\rm ran}(\mathcal{P},\mathcal{R})$ be the subset of those $A\in\mathcal{A}^{\rm ran}(\mathcal{P},\mathcal{R})$ with the following properties:  For each $f\in F$ the mappings 
$$
\omega\to{\rm card}_\Lambda(A,f,\omega)\in {\mathbb{N}}_0\cup\{\infty\},\quad \omega\to{\rm card}_{\Lambda'}(A,f,\omega)\in {\mathbb{N}}_0\cup\{\infty\}
$$  
(and hence $\omega\to {\rm card}_{\bar{\Lambda}}(A,f,\omega)$) are $\Sigma$-measurable 
and the mapping 
$
\omega\to A(f,\omega)\in G
$ 
is $\Sigma$-to-Borel measurable and ${\mathbb P}$-almost surely separably valued, the latter meaning that there is a separable subspace $G_f\subset G$ such that ${\mathbb P}(\{\omega\in\Omega:\, A(f,\omega)\in G_f\})=1$.
The error of $A\in \mathcal{A}_{\rm meas}^{\rm ran}(\mathcal{P},\mathcal{R})$ is defined as 
\begin{equation}
\label{E3}
e(\mathcal{P},A)=\sup_{f\in F}{\mathbb E}\,\|S(f)-A(f,\omega)\|_G.
\end{equation}
Given $n,k\in{\mathbb{N}}_0$, we define $\mathcal{A}_{n,k}^{\rm ran}(\mathcal{P},\mathcal{R})$ to be the set of those
$A\in \mathcal{A}_{\rm meas}^{\rm ran}(\mathcal{P},\mathcal{R})$ satisfying for each $f\in F$ 
$$
{\mathbb E}\,{\rm card}_\Lambda (A,f,\omega)\le n, \quad {\mathbb E}\,{\rm card}_{\Lambda'} (A,f,\omega)\le k.
$$
The $(n,k)$-th minimal  $\mathcal{R}$-restricted  randomized error of $S$ is defined as
\begin{equation}
\label{A7}
e_{n,k}^{\rm ran } (\mathcal{P},\mathcal{R})=\inf_{A\in\mathcal{A}_{n,k}^{\rm ran }(\mathcal{P},\mathcal{R}) }  
e(\mathcal{P},A).
\end{equation}

Special cases are the following: An access restriction $\mathcal R$ is called finite, if 
\begin{eqnarray}
\label{B6}
|K'|<\infty,\quad \lambda^{-1}(\{u\})\in \Sigma \quad(\lambda'\in\Lambda', u\in K').
\end{eqnarray}
In this case any $\mathcal R$-restricted randomized algorithm satisfies the following. For fixed $i\in {\mathbb{N}}_0$ and $f\in F$ the functions (see \eqref{A1})
$$
\omega\to L_i(\lambda_1(f,\omega),\dots,\lambda_{i-1}(f,\omega))\in \overline{\Lambda}, \quad \omega\to \lambda_i(f,\omega)\in \overline{K}
$$
take finitely many values and are $\Sigma$-to-$\Sigma_0(\overline{\Lambda})$-measurable (respectively  $\Sigma$-to-$\Sigma_0(\overline{K})$-measurable), where $\Sigma_0(M)$ denotes the $\sigma$-algebra generated by the finite subsets of a set $M$. This is readily checked by induction. It follows that the mapping 
$$
\omega \to\tau_i(\lambda_1(f,\omega),\dots,\lambda_i(f,\omega))\in \{0,1\}
$$
is measurable and 
$$
\omega \to\varphi_i(\lambda_1(f,\omega),\dots,\lambda_i(f,\omega))\in G
$$
takes only finitely many values and is $\Sigma$-to-Borel-measurable. Consequently, for each $f\in F$ the functions  ${\rm card}(A,f,\omega)$ and ${\rm card}'(A,f,\omega)$ are $\Sigma$-measurable, $A(f,\omega)$ takes only countably many values and is  $\Sigma$-to-Borel-measurable, hence $A\in \mathcal{A}_{\rm meas}^{\rm ran}(\mathcal{P},\mathcal{R})$.

An access restriction is called bit restriction, if  
\begin{equation}
\label{D1}
|K'|=2,\quad \Lambda'=\{ \xi_j \colon j \in {\mathbb{N}}\}
\end{equation}
with
$\xi_j \colon \Omega \to K'=\{u_0,u_1\}$  an independent sequence of random variables  such that
\begin{equation}
\label{D2}
P(\{\xi_j = u_0\}) = P(\{\xi_j=u_1\}) = 1/2, \quad (j \in {\mathbb{N}}).
\end{equation}
The corresponding restricted randomized algorithms are called bit Monte Carlo algorithms. A non-adaptive version of these was considered in \cite{HNP04,NP04,GYW06,YH08}.

Most frequently used is the case of uniform distributions on $[0,1]$. This means  $K'=[0,1]$ and $\Lambda'=\{\eta_j: j \in {\mathbb{N}}\}$, with $(\eta_j)$ being independent uniformly dis\-tri\-bu\-ted on $[0,1]$ random variables over $(\Omega,\Sigma,{\mathbb P})$.

We also use the notion of a deterministic and of an (unrestricted) randomized algorithm and the corresponding notions of minimal errors. For this we refer to 
\cite{Hei05a, Hei05b}, as well as to Section 2 of \cite{Hei20}. Let us however mention that the definition of a deterministic algorithm follows a similar scheme as the one given above. 
More than that, we can give an equivalent definition of a deterministic algorithm, viewing it as a special case of a randomized algorithm with an arbitrary restriction $\mathcal{R}$. 
Namely, a deterministic algorithm is an $\mathcal R$-restricted randomized algorithm $A$ with
$$L_1\in\Lambda, \quad
L_{i+1}(K^i)\subseteq\Lambda \quad (i\in {\mathbb{N}}).
$$
Consequently, for each $f\in F$ and $\omega,\omega_1\in\Omega$ we have ${\rm card}_{\Lambda'}(A,f,\omega)=0$ and
\begin{eqnarray*}
	A(f)&:=&A(f,\omega)=A(f,\omega_1)\\
	{\rm card}(A,f)&:=&{\rm card}_{\bar{\Lambda}}(A,f,\omega)={\rm card}_\Lambda(A,f,\omega)={\rm card}_\Lambda(A,f,\omega_1).
\end{eqnarray*}
Thus, such an algorithm ignores $\mathcal{R}$ completely. 
For a deterministic algorithm $A$ relation \eqref{E3} turns into 
\begin{equation}
\label{E8}
e(\mathcal{P},A)=\sup_{f\in F}\|S(f)-A(f)\|_G.
\end{equation}
A deterministic algorithm is in $\mathcal{A}_{n,k}^{\rm ran}(\mathcal{P},\mathcal{R})$ iff $\sup_{f\in F}{\rm card}(A,f)\le n$. 
Taking the infimum in \eqref{A7} over all such $A$ gives the $n$-th minimal error in the deterministic setting $e_n^{\rm det}(\mathcal{P})$. Clearly,  $e(\mathcal{P},A)$ and $e_n^{\rm det}(\mathcal{P})$ do not depend on $\mathcal R$. It follows that for each restriction $\mathcal R$ and $n,k\in{\mathbb{N}}_0$
\begin{equation*}
e_{n,k}^{\rm ran } (\mathcal{P},\mathcal R)\le e_n^{\rm det } (\mathcal{P}).
\end{equation*}

A restricted randomized algorithm is a special case of an (unrestricted) randomized algorithm. Being intuitively clear, this was formally checked in \cite{Hei20}, Proposition 2.1 and Corollary 2.2. Moreover, it was shown there that for each restriction $\mathcal R$ and $n,k\in{\mathbb{N}}_0$
\begin{equation*}
e_n^{\rm ran } (\mathcal{P})\le e_{n,k}^{\rm ran } (\mathcal{P},\mathcal R),
\end{equation*}
where $e_n^{\rm ran}(\mathcal{P})$ denotes the $n$-th minimal error in the randomized setting,

\section{Deterministic vs.\ Restricted Randomized Algorithms}\label{sec:3}

In this section we derive a relation between minimal restricted randomized errors and minimal deterministic errors for general problems. 
Variants of 
the following result have been obtained for non-adaptive random bit 
algorithms  in \cite[Prop.~1]{HNP04}, and for adaptive algorithms
that ask for random bits and function values in alternating order in \cite{GHMR19b}.
Obviously, the latter does not permit to analyze a trade-off
between the number of random bits and the number of function
values to be used in a computation.

\begin{theorem}\label{thm2}
	For all problems  $\mathcal P=(F,G,S,K,\Lambda)$ and  probability spaces with finite access restriction $\mathcal R=\big((\Omega,\Sigma,{\mathbb P}),K',\Lambda'\big)$, see \eqref{B6}, and for all $n,k\in{\mathbb{N}}_0$ we have 
	\[
	e^{\rm ran}_{n,k}(\mathcal P,\mathcal R) \geq \frac{1}{3}  e^{\rm det}_{ 3n|K'|^{3k}}(\mathcal P).
	\]
\end{theorem}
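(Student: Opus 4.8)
The plan is to prove the equivalent statement $e^{\rm det}_{3n|K'|^{3k}}(\mathcal P)\le 3\,e^{\rm ran}_{n,k}(\mathcal P,\mathcal R)$ by a derandomization argument: given an arbitrary $A\in\mathcal A^{\rm ran}_{n,k}(\mathcal P,\mathcal R)$, I will construct a \emph{deterministic} algorithm $D$ with $\sup_{f\in F}{\rm card}(D,f)\le 3n|K'|^{3k}$ and $e(\mathcal P,D)\le 3\,e(\mathcal P,A)$; taking the infimum over all such $A$ in \eqref{A7} then yields the theorem. Writing $\varepsilon:=e(\mathcal P,A)$, the idea is that for each fixed $f$ the run of $A$ branches only at the calls to $\Lambda'$, and since the restriction is finite each such call has at most $|K'|$ outcomes; after controlling the number of calls I can enumerate all branches and aggregate their outputs into one deterministic value.

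First I would truncate. Fix $f\in F$. Since $\mathbb E\,{\rm card}_\Lambda(A,f,\cdot)\le n$ and $\mathbb E\,{\rm card}_{\Lambda'}(A,f,\cdot)\le k$, Markov's inequality gives $\mathbb P\big({\rm card}_\Lambda(A,f,\cdot)>3n\big)<\tfrac13$ and $\mathbb P\big({\rm card}_{\Lambda'}(A,f,\cdot)>3k\big)<\tfrac13$, so the good event $\Omega_f$ on which $A$ stops after at most $3n$ information calls and at most $3k$ random calls has probability $q(f):=\mathbb P(\Omega_f)>\tfrac13$. This is the common source of both the budget factors $3n,3k$ and the prefactor $\tfrac13$.

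Next I would decompose $\Omega_f$ into trajectories. Given $f$, the answers to $\Lambda$-functionals are determined by $f$ through \eqref{A1}, so on $\Omega_f$ the whole sequence $(\lambda_i(f,\omega))_i$, and hence the output $A(f,\omega)$ and the set of information functionals used, depends on $\omega$ only through the (at most $3k$) answers in $K'$ to the random calls. This partitions $\Omega_f$ into at most $|K'|^{3k}$ measurable events $\Omega_{f,w}$ — where \eqref{B6} guarantees measurability — on each of which $A(f,\cdot)$ equals a fixed $A_w(f)\in G$ and a fixed set of at most $3n$ information functionals is queried; the probabilities $p_w(f):=\mathbb P(\Omega_{f,w})$ and the values $A_w(f)$ are determined by the information answers together with the fixed joint law of $\Lambda'$. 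Consequently the union of all information functionals arising over all $w$ has cardinality at most $3n|K'|^{3k}$, uniformly in $f$.

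Finally I would define $D$ to query (adaptively, simulating $A$ along every branch) exactly this union of at most $3n|K'|^{3k}$ information functionals, and to output the convex combination $D(f)=q(f)^{-1}\sum_w p_w(f)A_w(f)$, the sum running over the good trajectories. Because $\sum_w p_w(f)\|S(f)-A_w(f)\|_G=\int_{\Omega_f}\|S(f)-A(f,\omega)\|_G\,d\mathbb P\le\varepsilon$, the triangle inequality together with $q(f)>\tfrac13$ gives $\|S(f)-D(f)\|_G\le q(f)^{-1}\varepsilon<3\varepsilon$, hence $e(\mathcal P,D)\le 3\varepsilon$ by \eqref{E8}. The main obstacle is not this estimate but the rigorous formalization: I must realize the branching simulation as a genuine adaptive deterministic algorithm in the tuple form $(L_i,\tau_i,\varphi_i)$, specifying a sequential order in which the (data-dependent) information functionals are queried, and then verify the \emph{worst-case} bound $\sup_f{\rm card}(D,f)\le 3n|K'|^{3k}$ and the well-definedness of the $p_w(f)$ for an arbitrary finite $K'$ and an arbitrary joint distribution of $\Lambda'$. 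This book-keeping — rather than a single lucky choice of random string, which cannot work since different $f$ generally need different strings — is precisely where the adaptive setting makes the argument more involved than in \cite{HNP04}.
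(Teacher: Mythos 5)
Your proposal is correct and takes essentially the same route as the paper: Markov truncation at the budgets $3n,3k$ (giving ${\mathbb P}(B_f)\ge 1/3$), enumeration of the at most $|K'|^{3k}$ random-answer branches, each using at most $3n$ information calls, and output of the conditional expectation ${\mathbb E}(A(f,\cdot)\,|\,B_f)$ with the same triangle/Jensen estimate yielding the factor $3$. The book-keeping you explicitly defer is exactly what the paper executes in Lemma \ref{lem:4} (truncation realized as an algorithm for the augmented problem $\tilde G=G\oplus{\mathbb{R}}$, so that ${\mathbb P}(B_f)$ is produced as a second output component and divided out at the end) and Lemma \ref{l3} (an induction over $n+k$ that builds the deterministic tuple with ${\rm card}\le n|K'|^{k}$), so no idea is missing, only that formalization.
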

Without loss of generality in the sequel we only consider access restrictions with the  property $K\cap K'=\emptyset$, thus $
\bar{K}=K\cup K'$, $\bar{\Lambda}=\Lambda\cup\Lambda'$. 

\begin{lemma}\label{lem:4}
	Let $n,k \in {\mathbb{N}}_0$, let $A$ be a randomized algorithm for $\mathcal{P}$ with access restriction $\mathcal R=\big((\Omega,\Sigma,{\mathbb P}),K',\Lambda'\big)$. For each $f\in F$ let 
	\begin{equation}\label{D8}
	B_f=\{ \omega \in \Omega \colon
	{\rm card}(A,f,\omega) \leq n,\ {\rm card}^\prime(A,f,\omega) \leq k\}.
	\end{equation}
	Then there is an  $\mathcal R$-restricted  randomized algorithm $\tilde{A}$ for  $\tilde{\mathcal P}=(F,\tilde{G},\tilde{S},\Lambda,K)$, where $\tilde{G}=G\oplus {\mathbb{R}}$ and $\tilde{S}=(S(f),0)$, satisfying for all $f\in F$ and $\omega\in \Omega$
	\begin{eqnarray}
	{{\rm card}}(\tilde{A},f,\omega)&\le&n
	\label{D9a}\\
	{{\rm card}}'(\tilde{A},f,\omega)&\le&k
	\label{E0a}\\
	\tilde{A}(f,\omega)&=&(A(f,\omega) \cdot 1_{B_f}(\omega),1_{B_f}(\omega)).\label{E1a}
	\end{eqnarray}
\end{lemma}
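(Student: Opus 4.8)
The plan is to realize $\tilde A$ as a \emph{budget-truncated copy} of $A$. It asks exactly the same functionals as $A$ (the same information calls from $\Lambda$ and the same randomness calls from $\Lambda'$), but it keeps a running count of how many $\Lambda$-calls and how many $\Lambda'$-calls have been spent and halts one step \emph{before} the first call that would raise one of these counts above its budget $n$ or $k$. To the $G$-valued output I append a real coordinate whose value is $1$ if the simulated run of $A$ terminated within budget and $0$ otherwise. On $B_f$ the truncation never takes effect, so $\tilde A$ reproduces $A$ and flags $1$; off $B_f$ it aborts with the zero element and flag $0$. This is precisely the output formula \eqref{E1a}, and because $\tilde A$ by design never performs an over-budget call, the bounds \eqref{D9a} and \eqref{E0a} will hold for \emph{every} $\omega$, not merely in expectation.

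To set this up, first I would observe that the functionals produced along a run are determined by the observed values alone: for a history $\bar v=(v_1,\dots,v_i)\in\bar K^i$ put $\mu_1=L_1$ and $\mu_{j+1}(\bar v)=L_{j+1}(v_1,\dots,v_j)$, and define the reconstructed counters $c(\bar v)=|\{j\le i:\mu_j(\bar v)\in\Lambda\}|$ and $c'(\bar v)=|\{j\le i:\mu_j(\bar v)\in\Lambda'\}|$. In a genuine run on $(f,\omega)$ one has $v_j=\lambda_j(f,\omega)$, so $c$ and $c'$ agree with the running values of ${\rm card}$ and ${\rm card}'$. I then set $\tilde L_{i+1}=L_{i+1}$ and use the look-ahead stopping rule
\[
\tilde\tau_i(\bar v)=1 \iff \big(\tau_i(\bar v)=1\big)\lor\big(\mu_{i+1}(\bar v)\in\Lambda \land c(\bar v)\ge n\big)\lor\big(\mu_{i+1}(\bar v)\in\Lambda' \land c'(\bar v)\ge k\big),
\]
read for $i=0$ with $c=c'=0$ (which makes $\tilde\tau_0$ a constant). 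The output is $\tilde\varphi_i(\bar v)=(\varphi_i(\bar v),1)$ when $\tau_i(\bar v)=1$ and $\tilde\varphi_i(\bar v)=(0,0)$ otherwise, with $\tilde\varphi_0$ defined the same way from $\tau_0$.

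For the verification I would fix $f$ and split on $\omega$. If $\omega\in B_f$, then $A$ halts at step $m={\rm card}_{\bar\Lambda}(A,f,\omega)$ with both counters within budget; since the counters are nondecreasing, the overflow clause cannot have fired at any earlier step (an overflowing call before $m$ would force an over-budget counter at $m$), so $\tilde A$ proceeds in lockstep with $A$, stops at $m$ via $\tau_m=1$, and outputs $(A(f,\omega),1)$ while spending exactly ${\rm card}(A,f,\omega)\le n$ and ${\rm card}'(A,f,\omega)\le k$ calls. If $\omega\notin B_f$, let $p$ be the first step at which a counter exceeds its budget; this exists because $A$ either runs forever or halts over budget, and every call raises $c+c'$ by one. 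At step $p-1$ both counters are still within budget, $A$ has not yet stopped, and the next call overflows, so the overflow clause fires there for the first time; hence $\tilde A$ stops at $p-1$ with $\tau_{p-1}=0$, outputs $(0,0)$, and has spent at most $n$ and $k$ calls. As $1_{B_f}(\omega)$ equals $1$ in the first case and $0$ in the second, both outcomes match \eqref{E1a}, and \eqref{D9a}, \eqref{E0a} hold pointwise.

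The step needing the most care is the look-ahead together with the boundary cases $i=0$, $n=0$, and $k=0$: one must abort exactly one step before the offending call, which is legitimate precisely because $\mu_{i+1}(\bar v)=L_{i+1}(\bar v)$ is computable from the already observed history without performing the call. The two supporting claims — that the overflow clause never fires prematurely on $B_f$ and that it fires at the correct step off $B_f$ — both reduce to monotonicity of $c$ and $c'$ along a run. That $\tilde A$ is a genuine $\mathcal R$-restricted randomized algorithm for $\tilde{\mathcal P}$ is then immediate, since $\tilde L_i$, $\tilde\tau_i$, $\tilde\varphi_i$ are well-defined maps of the required types.
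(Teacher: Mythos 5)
Your proposal is correct and takes essentially the same route as the paper: your look-ahead stopping rule built from the reconstructed counters $c(\bar v),c'(\bar v)$ and the type of the next functional $L_{i+1}(\bar v)$ is exactly the paper's indicator $\zeta_i$ defined via the counts $d_{i+1},d'_{i+1}$ (which include the functional about to be called), your output rule $(\varphi_i,1)$ versus $(0,0)$ matches the paper's case distinction $\zeta_i\le\tau_i$ versus $\zeta_i>\tau_i$, and your verification splits on $\omega\in B_f$ just as the paper does, with the same monotonicity argument showing the truncation never fires before the halting step on $B_f$ and fires at the first over-budget call off $B_f$. No gaps.
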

\begin{proof}
	Let $A= ((L_i)_{i=1}^\infty, (\tau_i)_{i=0}^\infty, (\varphi_i)_{i=0}^\infty)$.
	For $i\in {\mathbb{N}}_0$ and $a=(a_1,\dots,a_i)\in \overline{K}^i$ let 
	\begin{eqnarray*}
		d_{i+1}(a)&=& \left|\left\{L_1,L_2(a_1),\dots,L_{i+1}(a_1,\dots,a_i)\right\}\cap \Lambda\right|
		\\
		d_{i+1}'(a)&=&\left|\left\{L_1,L_2(a_1),\dots,L_{i+1}(a_1,\dots,a_i)\right\}\cap \Lambda'\right|
		\\
		\zeta_i(a)&=&\left\{\begin{array}{lll}
			1 & \mbox{if}& (d_{i+1}(a)>n) \vee (d_{i+1}'(a)>k) \\
			0  && \mbox{otherwise.}
		\end{array}
		\right. 
	\end{eqnarray*}
	Now we define  $\tilde{A}=((L_i)_{i=1}^\infty, (\tilde{\tau}_i)_{i=0}^\infty, (\tilde{\varphi}_i)_{i=0}^\infty)$ by setting for $i\in {\mathbb{N}}_0$ and $a\in \overline{K}^i$
	\begin{eqnarray*}
		\tilde{\tau}_i(a)&=&\max(\tau_i(a), \zeta_i(a))
		\\
		\tilde{\varphi}_i(a)&=&\left\{\begin{array}{lll}
			(\varphi_i(a),1) & \quad\mbox{if}\quad \zeta_i(a)\le \tau_i(a) \\
			(0,0) & \quad\mbox{if}\quad \zeta_i(a)>\tau_i(a).
		\end{array}
		\right. 
	\end{eqnarray*}
	To show \eqref{D9a}--\eqref{E1a} we fix $f\in F$, $\omega\in \Omega$ and define 
	\begin{eqnarray*}
		a_1= L_1(f,\omega),\quad &&a_i= (L_i(a_1,\dots,a_{i-1}))(f,\omega) \quad(i\ge 2).
	\end{eqnarray*}
	Let  $m=\overline{{\rm card}}(A,f,\omega)$ and let $q$ be the smallest number $q\in {\mathbb{N}}_0$ with $\zeta_q(a_1,\dots,a_q)=1$. 
	First assume that $\omega\in B_f$. 
	Then for all $i<m$
	\begin{equation*}
	(d_{i+1}(a_1,\dots,a_i)\le n) \wedge (d_{i+1}'(a_1,\dots,a_i)\le k),
	\end{equation*}
	thus $\zeta_i(a_1,\dots, a_i)=0$ and therefore $\tilde{\tau}_i(a_1,\dots,a_i)=0$. Furthermore, 
	$$
	\zeta_i(a_1,\dots, a_m)\le \tau_m(a_1,\dots, a_m)=1, 
	$$
	which means $\overline{{\rm card}}(\tilde{A},f,\omega)=m$,
	\begin{eqnarray*}
		{{\rm card}}(\tilde{A},f,\omega)&=&d_m(a_1,\dots,a_{m-1})\le n
		\\
		{{\rm card}}'(\tilde{A},f,\omega)&=& d_m'(a_1,\dots,a_{m-1})\le k
		\\
		\tilde{A}(f,\omega)&=&(\varphi_m(a_1,\dots,a_m),1)=(A(f,\omega),1).
	\end{eqnarray*}
	Now let $\omega\in\Omega\setminus B_f$, hence 
	\begin{eqnarray*}
		\tau_0=\tau_1(a_1)=\dots =\tau_q(a_1,\dots,a_q)=0
		\\
		(d_{q+1}(a_1,\dots,a_q)>n) \vee (d_{q+1}'(a_1,\dots,a_q)>k),
	\end{eqnarray*}
	thus $\tilde{\tau}_q(a_1,\dots,a_q)=1$.
	Consequently, 
	\begin{eqnarray*}
		{{\rm card}}(\tilde{A},f,\omega)&\le & d_q(a_1,\dots,a_{q-1})\le n
		\\
		{{\rm card}}'(\tilde{A},f,\omega)&\le & d_q'(a_1,\dots,a_{q-1})\le  k
		\\
		\tilde{A}(f,\omega)&=&(0,0).
	\end{eqnarray*}
\end{proof}

The key ingredient of the proof of Theorem \ref{thm2} is the following 
\begin{lemma}\label{l3}
	Let $n,k \in {\mathbb{N}}_0$ and let $A$ be a randomized algorithm for $\mathcal{P}$ with finite access restriction $\mathcal R=\big((\Omega,\Sigma,{\mathbb P}),K',\Lambda'\big)$ such that  
	\begin{equation}\label{eq6}
	{\rm card} (A,f,\omega) \leq n ,\quad {\rm card}^\prime(A,f,\omega) \leq k
	\end{equation}
	for all $f \in F$ and $\omega \in \Omega$.
	Then  there exists a deterministic algorithm $A^*$ for $\mathcal P$ with
	\begin{equation}
	\label{E9}
	A^*(f) = {\mathbb E}\,(A(f,\cdot)), \quad {\rm card}(A^*,f) \leq n  |K'|^k\quad (f \in F).
	\end{equation}
\end{lemma}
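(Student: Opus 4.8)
The plan is to fix $f\in F$ and study the finite \emph{computation tree} produced by $A$ on input $f$ when the random functionals are allowed to take all their values in $K'$. By \eqref{eq6} every run queries at most $k$ functionals from $\Lambda'$, each taking one of $|K'|$ values. For a finite sequence $c=(c_1,\dots,c_r)$ with $r\le k$ and $c_j\in K'$ let $\Omega_c\subseteq\Omega$ be the set of those $\omega$ whose run of $A$ on $(f,\omega)$ produces exactly the random values $c$; by the finiteness condition \eqref{B6} these sets are measurable, and over all occurring $c$ they partition $\Omega$. Let $C$ collect those $c$ with $\mathbb{P}(\Omega_c)>0$; since every run passes at most $k$ random nodes, each branching at most $|K'|$-fold, we have $|C|\le|K'|^k$. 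On each $\Omega_c$ the whole run of $A$ — which functionals from $\Lambda$ are asked, their values $\lambda(f)$, the random values $c$, and the final output — is completely determined, so $A(f,\cdot)$ is constant on $\Omega_c$ with value $g_c\in G$. Hence $A(f,\cdot)$ is a simple measurable function and, discarding the null sets,
\begin{equation*}
\mathbb{E}\,(A(f,\cdot))=\sum_{c\in C}\mathbb{P}(\Omega_c)\,g_c .
\end{equation*}

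I would then define $A^*$ to traverse this tree by a depth-first search, using a fixed ordering of $K'$ so that the traversal is deterministic. At an information node $A^*$ issues the corresponding query in $\Lambda$ and stores the value $\lambda(f)$; at a random node it makes no query on $\Omega$ but descends successively into every value $u\in K'$ for which the current partial outcome still has positive probability, treating $u$ as a simulated outcome. Since such a partial outcome is realized by some $\omega\in\Omega$, the bound \eqref{eq6} applies to each explored path. The point to be checked is that this search fits the deterministic model of Section~\ref{sec:2}: at each stage the next functional $L^*_{i+1}$, the stopping rule $\tau^*_i$, and the output $\varphi^*_i$ must be computable from the recorded history $(\lambda^*_1(f),\dots,\lambda^*_i(f))$ alone. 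This holds because $A^*$ can \emph{replay} the search from the known maps of $A$: feeding the stored information values, in order, to the information nodes and following the fixed branch order at the random nodes recovers the current position in the tree, and hence the next action. At each leaf $c$, $A^*$ reads off $g_c$ from the history together with the branch values and multiplies it by the constant $\mathbb{P}(\Omega_c)$, which is determined by $\mathcal R$ alone; when the search terminates it outputs the accumulated sum $\sum_{c\in C}\mathbb{P}(\Omega_c)\,g_c=\mathbb{E}\,(A(f,\cdot))$.

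For the cardinality bound I would count information nodes. The tree has $|C|\le|K'|^k$ leaves, and by \eqref{eq6} every root-to-leaf path carries at most $n$ information nodes; as each information node lies on at least one such path, summing over leaves bounds the total number of information nodes by $n|K'|^k$. A depth-first search visits each node once, so $A^*$ makes at most $n|K'|^k$ information queries, giving ${\rm card}(A^*,f)\le n|K'|^k$. I expect the real obstacle to lie not in this combinatorial count but in the formal verification that the traversal is expressible in the model: one must write down $L^*_{i+1},\tau^*_i,\varphi^*_i$ explicitly and prove, by induction on the length of the run, that the replay identifies the current tree position correctly (in particular that backtracking across random branches is handled right) and that the value emitted at the end equals $\mathbb{E}\,(A(f,\cdot))$. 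Measurability and integrability of $A(f,\cdot)$ are immediate from the finiteness of $\mathcal R$, as noted after \eqref{B6}.
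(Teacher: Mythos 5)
Your proposal is correct in substance and arrives at the right count, but it is organized differently from the paper's proof. You build, for fixed $f$, the full computation tree, prune to positive-probability branches (which is exactly what makes \eqref{eq6} applicable to every explored path, since a positive-probability partial outcome is realized by some $\omega\in\Omega$), and derandomize globally by a depth-first traversal, obtaining the weights ${\mathbb P}(\Omega_c)$ directly as joint probabilities of the queried functionals' level sets -- which correctly handles arbitrary dependence among the elements of $\Lambda'$. The paper instead argues by induction on $m=n+k$: it peels off the first query $L_1$, forming restricted subproblems $\mathcal P_u$ (restricting $F$ to $F_u=\{f:\,L_1(f)=u\}$ when $L_1\in\Lambda$) or conditioned restrictions $\mathcal R_u$ (conditioning ${\mathbb P}$ on $\{L_1=u\}$ when $L_1\in\Lambda'$), applies the inductive hypothesis to obtain deterministic algorithms $A_u^*$ with ${\rm card}(A_u^*,f)\le (n-1)|K'|^k$ resp.\ $n|K'|^{k-1}$, and reassembles: in the $\Lambda$ case by prefixing the query $L_1$ and dispatching on its value, in the $\Lambda'$ case by sequentially concatenating all the $A_u^*$ via Lemma 3 of \cite{Hei05b} and outputting $\sum_u {\mathbb P}(L_1^{-1}(\{u\}))A_u^*(f)$. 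That concatenation is precisely your depth-first search executed one level at a time, and the conditional measures ${\mathbb P}_u$ generate your joint weights ${\mathbb P}(\Omega_c)$ recursively; your pruning to positive probability corresponds to the paper's restriction to $\tilde K$. What the induction buys is exactly the step you flag as the real obstacle: exhibiting $L_{i+1}^*,\tau_i^*,\varphi_i^*$ and verifying that the traversal position is recoverable from the recorded history alone. In the paper this replay verification collapses to two short index-shift inductions (the identities $a_{i,u}=a_{i+1}$ and $b_{i+1}=b_{i,u}$), with all cross-branch bookkeeping delegated to the cited composition lemma, whereas your global construction would require an explicit encoding of the DFS state and an induction on the run length, as you anticipate. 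So your route is viable and combinatorially more transparent (leaves times path length gives $n|K'|^k$ at once, where the paper obtains the same bound from \eqref{B3} and \eqref{C1}), but the part you defer is precisely where the paper's inductive formulation does its work; to be complete you would still need to carry out that verification, or invoke a composition lemma of the type the paper cites.
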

\begin{proof}
	Let
	$\mathcal P=(F,G,S,K,\Lambda)$, $A = ((L_i)_{i=1}^\infty, (\tau_i)_{i=0}^\infty, (\varphi_i)_{i=0}^\infty)$.
	We argue by induction over $m=n+k$. 
	If $m=0$, then $\tau_0=1$, hence $\overline{{\rm card}}(A,f,\omega)=0$, thus $A(f,\omega)=\varphi_0$ for all $f \in F$ and $\omega \in \Omega$, and the result follows. 
	
	Now let $m\ge 1$. We can assume that $\tau_0=0$, otherwise $A$ satisfies \eqref{eq6} with $n=k=0$ and we are back to the case $m=0$. Let $\tilde{K}\subset \overline{K}$ be defined by
	\begin{equation*}
	\tilde{K}=\left\{ \begin{array}{lll}
	\left\{u\in K:\,L_1^{-1}(\{u\})\ne\emptyset\right\}& \quad\mbox{if}\quad L_1\in \Lambda   \\[.2cm]
	\left\{u\in K':\,{\mathbb P}(L_1^{-1}(\{u\}))\ne0\right\} & \quad\mbox{if}\quad L_1\in \Lambda'.  
	\end{array}
	\right. 
	\end{equation*}
	For every $u\in\tilde{K}$ we define a problem $\mathcal P_u=(F_u,G,S_u,K,\Lambda_u)$ and a probability space with access restriction   $\mathcal R_u=\big((\Omega_u,\Sigma_u,{\mathbb P}_u),K',\Lambda'_u\big)$ as follows. 
	If $L_1\in \Lambda$, we set $\mathcal R_u=\mathcal R$ and
	\begin{eqnarray*}
		F_u&=&\{f\in F:\, L_1(f)=u\}, \quad
		S_u=S|_{F_u},\quad
		\Lambda_u=\{\lambda|_{F_u}:\, \lambda\in\Lambda\}.
	\end{eqnarray*}
	If $L_1\in\Lambda'$, we put $\mathcal P_u=\mathcal P$ and 
	\begin{eqnarray*}
		\Omega_u&=&\{\omega\in \Omega:\, L_1(\omega)=u\},\quad
		\Sigma_u=\{B\cap \Omega_u:\;B\in \Sigma\}
		\\
		{\mathbb P}_u(C)&=& {\mathbb P}\left(\Omega_u\right)^{-1}{\mathbb P}\left(C\right)\quad(C\in \Sigma_u),\quad \Lambda_u'=\{\lambda'|_{\Omega_u}:\, \lambda\in\Lambda'\}.
	\end{eqnarray*}
	Let $\varrho_u:\Lambda\cup\Lambda'\to \Lambda_u\cup\Lambda'_u$ be defined as 
	\begin{equation*}
	\varrho_u(\lambda)=\left\{\begin{array}{lll}
	\lambda|_{F_u}  & \quad\mbox{if}\quad \lambda\in \Lambda   \\
	\lambda|_{\Omega_u}  & \quad\mbox{if}\quad \lambda\in \Lambda'  
	\end{array}
	\right. 
	\end{equation*}
	and let $\sigma_u: \Lambda_u\cup\Lambda'_u\to \Lambda\cup\Lambda'$ be any mapping satisfying
	\begin{equation}
	\label{C6}
	\varrho_u\circ \sigma_u={\rm id}_{\Lambda_u\cup\Lambda'_u}.
	\end{equation}
	Furthermore, we define a random algorithm  $A_u=((L_{i,u})_{i=1}^\infty, (\tau_{i,u})_{i=0}^\infty, (\varphi_{i,u})_{i=0}^\infty)$ for $\mathcal P_u$ with access restriction $\mathcal R_u$ by setting
	for $i\ge 0$, $z_1,\dots,z_i\in \overline{K}$
	\begin{eqnarray}
	L_{i+1,u}(z_1,\dots,z_i)&= & 
	\big( \varrho_u\circ L_{i+2}\big)(u,z_1,\dots,z_i)
	\label{P1}\\[.1cm]
	\tau_{i,u}(z_1,\dots,z_i)&=&\tau_{i+1}(u,z_1,\dots,z_i)
	\label{P2}\\[.1cm]
	\varphi_{i,u}(z_1,\dots,z_i)&=&\varphi_{i+1}(u,z_1,\dots,z_i)
	\label{P3}
	\end{eqnarray}
	(in this and similar situations below the case $i=0$ with variables $z_1,\dots,z_i$ is understood in the obvious way: no dependence on $z_1,\dots,z_i$).
	
	Next we establish the relation of the algorithms $A_u$ to $A$. Fix $f \in F_u$, $\omega \in \Omega_u$, and let $(a_i)_{i=1}^\infty\subseteq \overline{K}$ be given by 
	\begin{eqnarray}
	a_1&=&L_1(f,\omega)=u\label{RC1}\\
	a_i&=&\big(L_i(a_1,\dots,a_{i-1})\big)(f,\omega)\quad(i\ge 2)\label{N6},
	\end{eqnarray}
	and similarly $(a_{i,u})_{i=1}^\infty\subseteq \overline{K}$ by
	\begin{eqnarray}
	a_{i,u}&=&\big(L_{i,u}(a_{1,u},\dots,a_{i-1,u})\big)(f,\omega)\label{N9}.
	\end{eqnarray}
	We show by induction that 
	\begin{equation}
	\label{H3}
	a_{i,u}=a_{i+1}\quad (i\in{\mathbb{N}}). 
	\end{equation}
	Let $i=1$. Then \eqref{N9}, \eqref{P1}, \eqref{RC1}, and  \eqref{N6} imply 
	\begin{eqnarray*}
		a_{1,u}&=&L_{1,u}(f,\omega)=\big(L_2(u)\big)(f,\omega)=\big(L_2(a_1)\big)(f,\omega)=a_2.
	\end{eqnarray*}
	For the induction step we let  $j\in {\mathbb{N}}$ and suppose that \eqref{H3} holds for all  $i\le j$. 
	Then \eqref{N9}, \eqref{P1},   \eqref{H3}, and \eqref{N6} yield
	\begin{eqnarray*}
		a_{j+1,u}
		&=& (L_{j+1,u}(a_{1,u},\dots,a_{j,u}))(f,\omega)
		= (L_{j+2}(u,a_{1,u},\dots,a_{j,u}))(f,\omega)
		\\
		&=& (L_{j+2}(a_1,a_2,\dots,a_{j+1}))(f,\omega)
		=a_{j+2}.
	\end{eqnarray*}
	This proves \eqref{H3}. As a consequence of this relation and of \eqref{P1}, \eqref{P2}, and  \eqref{P3} we obtain 
	for all $i\in{\mathbb{N}}_0$ 
	\begin{eqnarray*}
		L_{i+1,u}(a_{1,u},\dots,a_{i,u})&=&\big( \varrho_u\circ L_{i+2}\big)(u,a_{1,u},\dots,a_{i,u})=\big( \varrho_u\circ L_{i+2}\big)(a_1,\dots,a_{i+1})
		\\
		\tau_{i,u}(a_{1,u},\dots,a_{i,u})&=&\tau_{i+1}(u,a_{1,u},\dots,a_{i,u})=\tau_{i+1}(a_1,\dots,a_{i+1})
		\\
		\varphi_{i,u}(a_{1,u},\dots,a_{i,u})&=&\varphi_{i+1}(u,a_{1,u},\dots,a_{i,u})=\varphi_{i+1}(a_1,\dots,a_{i+1}).
	\end{eqnarray*}
	Hence, for all $f\in F_u$ and $\omega \in \Omega_u$
	\begin{eqnarray}
	\overline{{\rm card}}(A_u,f,\omega)&=&\overline{{\rm card}}(A,f,\omega)-1
	\notag\\
	A_u(f,\omega)&=&A(f,\omega).\label{A4}
	\end{eqnarray}
	Furthermore, if $L_1\in\Lambda$, then 
	\begin{eqnarray*}
		{\rm card}(A_u,f,\omega)&=&{\rm card}(A,f,\omega)-1\le n-1
		\\
		 {\rm card}'(A_u,f,\omega)&=&{\rm card}'(A,f,\omega)\le k, 
	\end{eqnarray*}
	and if $L_1\in\Lambda'$,
	\begin{eqnarray*}
		{\rm card}(A_u,f,\omega)&=&{\rm card}(A,f,\omega)\le n
		\\
		{\rm card}'(A_u,f,\omega)&=&{\rm card}'(A,f,\omega)-1\le k-1. 
	\end{eqnarray*}

	Now we apply the induction assumption and obtain a deterministic algorithm $$A_u^*=((L_{i,u}^*)_{i=1}^\infty, (\tau_{i,u}^*)_{i=0}^\infty, (\varphi_{i,u}^*)_{i=0}^\infty)$$ for $\mathcal P_u$ with 
	\begin{equation}
	\label{A5}
	A_u^*(f) = {\mathbb E}\,_{{\mathbb P}_u}(A_u(f,\cdot)) 
	\end{equation}
	and 
	\begin{equation}
	\label{B3}
	{\rm card}(A^*_u,f) \leq\left\{\begin{array}{lll}
	(n-1)  |K'|^k  & \quad\mbox{if}\quad L_1\in \Lambda   \\
	n  |K'|^{k-1}  & \quad\mbox{if}\quad L_1\in \Lambda'      
	\end{array}
	\right. 
	\end{equation}
	for every $f \in F_u$. 
	
	Finally we use the algorithms $A_u^*$ to compose a deterministic algorithm  
	$$
	A^* = ((L_i^*)_{i=1}^\infty, (\tau_i^*)_{i=0}^\infty,
	(\varphi_i^*)_{i=0}^\infty)
	$$ for $\mathcal P$. This and the completion of the proof is done  separately for each of the cases 
	$L_1\in\Lambda$ and $L_1\in\Lambda'$.
	
	If $L_1\in\Lambda$, then we set
	\begin{eqnarray*}
		L_1^*=L_1,\quad\tau_0^*=\tau_0=0, \quad \varphi_0^*=\varphi_0,\label{M1}
	\end{eqnarray*}
	furthermore, for $i\in{\mathbb{N}}$, $z_1\in \tilde{K}$, $z_2,\dots,z_i\in \overline{K}$ we let (with $\sigma_{z_1}$ defined by \eqref{C6})
	\begin{eqnarray}
	L_{i+1}^*(z_1,\dots,z_i)&= & 
	\big(\sigma_{z_1}\circ L_{i,z_1}^*\big)(z_2,\dots,z_i)
	\label{Q1}\\[.1cm]
	\tau_{i}^*(z_1,\dots,z_i)&=&\tau_{i-1,z_1}^*(z_2,\dots,z_i)
	\label{Q2}\\[.1cm]
	\varphi_{i}^*(z_1,\dots,z_i)&=&\varphi_{i-1,z_1}^*(z_2,\dots,z_i).
	\label{Q3}
	\end{eqnarray}
	For $i\ge 1$, $z_1\in \overline{K}\setminus \tilde{K}$, and $z_2,\dots,z_i\in \overline{K}$ we define 
	\begin{eqnarray*}
		L_{i+1}^*(z_1,\dots,z_i)=  L_1,\quad
		\tau_{i}^*(z_1,\dots,z_i)=1, \quad
		\varphi_{i}^*(z_1,\dots,z_i)=\varphi_0.
	\end{eqnarray*}
	Let $u\in\tilde{K}$ and $f\in F_u$. We show that
	\begin{eqnarray}
	A^*(f)&=&A_u^*(f)
	\label{B4}\\
	{\rm card}(A^*,f)&=&{\rm card}(A_u^*,f)+1.\label{B5}
	\end{eqnarray}
	Let $(b_i)_{i=1}^\infty\subseteq \overline{K}$ be given by 
	\begin{eqnarray}
	b_1&=&L_1^*(f)=L_1(f)=u   
	\label{G7}\\
	b_i&=&\big(L_i^*(b_1,\dots,b_{i-1})\big)(f)\quad(i\ge 2)\label{A8},
	\end{eqnarray}
	and similarly $(b_{i,u})_{i=1}^\infty\subseteq \overline{K}$ by
	\begin{eqnarray}
	b_{i,u}&=&\big(L_{i,u}^*(b_{1,u},\dots,b_{i-1,u})\big)(f)\label{A9}.
	\end{eqnarray}
	Then
	\begin{equation}
	\label{B2}
	b_{i+1}=b_{i,u}\quad (i\in{\mathbb{N}}). 
	\end{equation}
	Indeed, 
	for $i=1$ we conclude from  \eqref{A8}, \eqref{G7}, \eqref{Q1}, and \eqref{A9} 
	\begin{eqnarray*}
		b_2&=&(L^*_2(b_1))(f)=(L^*_2(u))(f)=L^*_{1,u}(f)=b_{1,u}.
	\end{eqnarray*}
	Now let  $j\in {\mathbb{N}}$ and assume \eqref{B2} holds for all  $i\le j$. 
	By \eqref{A8},  \eqref{G7},  \eqref{Q1},  and \eqref{A9} 
	\begin{eqnarray*}
		b_{j+2}
		&=& (L^*_{j+2}(b_1,b_2,\dots,b_{j+1}))(f)
		= (L^*_{j+2}(u,b_{1,u},\dots,b_{j,u}))(f)
		\\
		&=& (L^*_{j+1,u}(b_{1,u},\dots,b_{j,u}))(f)
		=b_{j+1,u}.
	\end{eqnarray*}
	This proves \eqref{B2}.
	It follows from \eqref{B2},  \eqref{G7},  \eqref{Q2},  and  \eqref{Q3} that
	for all $i\in{\mathbb{N}}_0$ 
	\begin{eqnarray*}
		\tau_{i+1}^*(b_1,\dots,b_{i+1})&=&\tau_{i+1}^*(u,b_{1,u},\dots,b_{i,u})=\tau_{i,u}^*(b_{1,u},\dots,b_{i,u})
		\\
		\varphi_{i+1}^*(b_1,\dots,b_{i+1})&=&\varphi_{i+1}^*(u,b_{1,u},\dots,b_{i,u})=\varphi_{i,u}^*(b_{1,u},\dots,b_{i,u}).
	\end{eqnarray*}
	This shows \eqref{B4} and \eqref{B5}. From \eqref{B4}, \eqref{A5}, and \eqref{A4} we conclude for $u\in \tilde{K}$, $f\in F_u$, recalling that $\mathcal R_u=\mathcal R$,
	$$
	A^*(f)=A_u^*(f)= {\mathbb E}\,_{{\mathbb P}}(A_u(f,\cdot)) ={\mathbb E}\,_{{\mathbb P}}(A(f,\cdot)). 
	$$
	Since $\cup_{u\in\tilde{K}}F_u=F$, the first relation of \eqref{E9} follows. The second relation is a direct consequence of \eqref{B5} and \eqref{B3},  completing the induction for the case $L_1\in\Lambda$.

	If $L_1\in\Lambda'$, then we use the algorithms $(A_u^*)_{u\in \tilde{K}}$ for $\mathcal P_u=\mathcal P$ and Lemma 3 of \cite{Hei05b} to obtain a deterministic algorithm $A^*$ for $\mathcal P$ such that for $f\in F$
	\begin{eqnarray}
	A^*(f)&=&\sum_{u\in  \tilde{K}}{\mathbb P}(L_1^{-1}(\{u\})A_u^*(f)
	\label{C0}\\
	{\rm card}(A^*,f)&=&\sum_{u\in \tilde{K}}{\rm card}(A_u^*,f).\label{C1}
	\end{eqnarray}
	It follows from  \eqref{C0},  \eqref{A5}, and \eqref{A4} that
	\begin{eqnarray*}
		A^*(f)&=&\sum_{u\in K':\,{\mathbb P}(L_1^{-1}(\{u\}))>0 }{\mathbb P}(L_1^{-1}(\{u\}){\mathbb E}\,_{{\mathbb P}_u}A_u(f,\cdot)
		\\
		&=&\sum_{u\in K':\,{\mathbb P}(L_1^{-1}(\{u\}))>0 }\int_{L_1^{-1}(\{u\})}A_u(f,\omega) d{\mathbb P}(\omega)
		\\
		&=&\sum_{u\in K':\,{\mathbb P}(L_1^{-1}(\{u\}))>0 }\int_{L_1^{-1}(\{u\})}A(f,\omega) d{\mathbb P}(\omega)
		={\mathbb E}\,_{{\mathbb P}}A_u(f,\cdot).
	\end{eqnarray*}
Furthermore, \eqref{B3} and \eqref{C1} imply
	${\rm card}(A^*,f)\le n |K'|^k$.
\end{proof}

\begin{proof}
	\hspace{-0.1cm}\textit{\textbf{of Theorem \ref{thm2}}}
	The proof is similar to the proof of \cite[Lem.~11]{GHMR19b}.
	Let $\delta>0$ and let 
	$$
	A= ((L_i)_{i=1}^\infty, (\tau_i)_{i=0}^\infty, (\varphi_i)_{i=0}^\infty) \in \mathcal{A}^{\rm ran}_{n,k}(\mathcal P,\mathcal R)
	$$ 
	be a randomized algorithm for $\mathcal P$ with restriction $\mathcal R$ satisfying 
	\begin{equation}
	\label{E4}
	e(A,\mathcal P) \le 
	e^{\rm ran}_{n,k}(\mathcal P,\mathcal R)+\delta.
	\end{equation}
	For $f \in F$ define 
	\[
	B_f=\{ \omega \in \Omega \colon
	{\rm card}(A,f,\omega) \leq 3n,\ {\rm card}^\prime(A,f,\omega) \leq 3k\}.
	\]
	Observe that $B_f \in \Sigma$ and $P(B_f)\geq 1/3$.
	For the conditional expectation
	\begin{equation*}
	{\mathbb E}\,(A(f,\cdot) \,|\, B_f) = 
	\frac{{\mathbb E}\,\left(A(f,\cdot) \cdot 1_{B_f}\right)}{P(B_f)} 
	\end{equation*}
	of $A(f,\cdot)$ given $B_f$ we obtain
	\begin{eqnarray}
	\lefteqn{3{\mathbb E}\,\left\|S(f)-A(f,\cdot)\right\|_G} \nonumber\\
	&\geq& 
	{\mathbb E}\,\left(\|S(f)-A(f,\cdot)\|_G \,|\, B_f\right)
	\geq \left\|S(f)-{\mathbb E}\,\left(A(f,\cdot) \,|\, B_f \right)\right\|_G
	\label{E5}
	\end{eqnarray}
	by means of Jensen's inequality. Our goal is now to design a deterministic algorithm with  input-output mapping  $f \mapsto {\mathbb E}\,(A(f,\cdot) \,|\, B_f)$.

	From Lemma \ref{lem:4} we conclude that there is an $\mathcal R$-restricted  randomized algorithm $\tilde{A}=((L_i)_{i=1}^\infty, (\tilde{\tau}_i)_{i=0}^\infty, (\tilde{\varphi}_i)_{i=0}^\infty)$ for $\tilde{\mathcal P}=(F,\tilde{G},\tilde{S},\Lambda,K)$, where $\tilde{G}=G\oplus {\mathbb{R}}$ and   $\tilde{S}(f)=(S(f),0)$ $(f\in F)$, satisfying for all $f\in F$ and $\omega\in \Omega$
	\begin{eqnarray*}
		&&{{\rm card}}(\tilde{A},f,\omega)\le 3n, \quad
		{{\rm card}}'(\tilde{A},f,\omega)\le 3k,
		\label{E0}\\
		&&\tilde{A}(f,\omega)=(A(f,\omega) \cdot 1_{B_f}(\omega),1_{B_f}(\omega)).\label{E1}
	\end{eqnarray*}
	By Lemma \ref{l3} there is a deterministic algorithm $A^*= ((L_i^*)_{i=1}^\infty, (\tau_i^*)_{i=0}^\infty, (\varphi_i^*)_{i=0}^\infty)$
	for $\tilde{\mathcal P}$
	such that for all $f \in F$ 
	\begin{eqnarray*}
		{\rm card}(A^*,f) \leq 3n  |K'|^{3k},\quad
		A^*(f)=\left(\int_{B_f}A(f,\omega) d{\mathbb P}(\omega),{\mathbb P}(B_f)\right).
	\end{eqnarray*}
	It remains to modify $A^*$ as follows 
	$$
	\tilde{A}^*= ((L_i^*)_{i=1}^\infty, (\tau_i^*)_{i=0}^\infty, (\psi_i^*)_{i=0}^\infty),
	$$
	where for $i\in{\mathbb{N}}_0$ and $a\in K^i$
	\begin{equation*}
	\psi_i^*(a)=  \left\{\begin{array}{lll}
	\frac{\varphi_{i,1}^*(a)}{\varphi_{i,2}^*(a)} & \quad\mbox{if}\quad  \varphi_{i,2}^*(a)\ne 0  \\
	0 & \quad\mbox{if}\quad\varphi_{i,2}^*(a)= 0,
	\end{array}
	\right. 
	\end{equation*}
	with $\varphi_i^*(a)=(\varphi_{i,1}^*(a),\varphi_{i,2}^*(a))$ being the splitting into the $G$ and ${\mathbb{R}}$ component. Hence for each $f\in F$
\begin{eqnarray*}
{\rm card}(\tilde{A}^*,f) &\leq& 3n  |K'|^{3k}
\\
\tilde{A}^*(f)&=&{\mathbb E}\,(A(f,\cdot) \,|\, B_f),
\end{eqnarray*}
and therefore we conclude, using \eqref{E4} and \eqref{E5},
\begin{eqnarray*}
	e^{\rm det}_{ 3n|K'|^{3k}}(\mathcal P)\le e(\tilde{A}^*,\tilde{\mathcal P})\le 3e(A,\mathcal P)\le 3(e^{\rm ran}_{n,k}(\mathcal P,\mathcal R)+\delta)
\end{eqnarray*}
for each $\delta>0$.
\end{proof}

\section{Applications}\label{sec:4}
\subsection {Integration of functions in Sobolev spaces}
Let $r,d\in{\mathbb{N}}$, $1\le p<\infty$, $Q=[0,1]^d$, let $C(Q)$ be the space of continuous functions on $Q$, and $W_p^r(Q)$ the Sobolev space, see \cite{Ada75}. Then $W_p^r(Q)$ is embedded into  $C(Q)$ iff
\begin{equation}
\label{E2}
\begin{array}{lllll}
(p=1   \;\mbox{and} \quad r/d\ge 1)\quad    
\mbox{or}\quad
(1<p<\infty  \;\mbox{and} \quad  r/d>1/p). 
\end{array} 
\end{equation}
Let $B_{W_p^r(Q)}$ be the unit ball of $W_p^r(Q)$, $B_{W_p^r(Q)}\cap C(Q)$ the set of those elements of the unit ball which are continuous (more precisely, of equivalence classes, which contain a continuous representative), and define
$$
F_1= \left\{\begin{array}{lll}
& B_{W_p^r(Q)}&\quad\mbox{if the embedding condition \eqref{E2} holds}   \\
& B_{W_p^r(Q)}\cap C(Q)&\quad\mbox{otherwise.}    
\end{array}
\right. 
$$
Moreover, let $I_1:W_p^r(Q)\to {\mathbb{R}}$ be the integration operator
$$
I_1f=\int_Q f(x)dx.
$$
and let $\Lambda_1=\{\delta_x\colon x\in Q\}$ be the set of point evaluations, where $\delta_x(f)=f(x)$.
Put into the general framework of \eqref{I4}, we consider the problem
$
\mathcal{P}_1= (F_1,{\mathbb{R}},I_1,{\mathbb{R}},\Lambda_1).
$
Set $\bar{p}=\min(p,2)$. 
Then the following is known (for (\ref{AB4}--\ref{BB2}) below see \cite{Hei12} and references therein). There are constants $c_{1-6}>0$ such that for all $n\in{\mathbb{N}}_0$
\begin{equation}
\label{AB4}
c_1 n^{-r/d-1+1/\bar{p}}\le e_n^{\rm ran}(\mathcal{P}_1)
\le c_2 n^{-r/d-1+1/\bar{p}},
\end{equation}
moreover, if the embedding condition holds, then 
\begin{equation}
\label{BB1}
c_3n^{-r/d}\le e_n^{\rm det}(\mathcal{P}_1)
\le c_4 n^{-r/d},
\end{equation}
while if the embedding condition does not hold, then 
\begin{equation}
\label{BB2}
c_5\le e_n^{\rm det}(\mathcal{P}_1)\le c_6.
\end{equation}

Theorem \ref{thm2} immediately gives (compare this with the rate in the unrestricted setting \eqref{AB4})
\begin{corollary}\label{cor:1} Assume that the embedding condition \eqref{E2} does not hold and  let
	$\mathcal{R}$ be any finite access restriction, see  \eqref{B6}.  Then there is a constant $c>0$ such that for all $n,k\in{\mathbb{N}}$ 
	$$
	e_{n,k}^{\rm ran} (\mathcal{P}_1,\mathcal{R})\ge c.
	$$
\end{corollary}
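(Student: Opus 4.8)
The plan is to apply Theorem~\ref{thm2} directly to the integration problem $\mathcal{P}_1$ and then feed in the uniform lower bound \eqref{BB2} for the deterministic minimal errors. First I would fix the given finite access restriction $\mathcal{R}$, so that $|K'|<\infty$, and apply Theorem~\ref{thm2} with $\mathcal P = \mathcal P_1$. This yields, for all $n,k\in{\mathbb{N}}$,
\[
e_{n,k}^{\rm ran}(\mathcal{P}_1,\mathcal{R})\ge \tfrac{1}{3}\, e^{\rm det}_{3n|K'|^{3k}}(\mathcal{P}_1).
\]

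The decisive point is that, under the hypothesis that the embedding condition \eqref{E2} fails, the deterministic minimal error does not decay with the number of function values: by \eqref{BB2} we have $e_m^{\rm det}(\mathcal{P}_1)\ge c_5$ for every $m\in{\mathbb{N}}_0$, with $c_5>0$ independent of $m$. Since $\mathcal R$ is finite, $|K'|$ is a positive integer, so $m:=3n|K'|^{3k}$ is a (possibly large, but) finite nonnegative integer for which \eqref{BB2} applies. Substituting into the previous display gives
\[
e_{n,k}^{\rm ran}(\mathcal{P}_1,\mathcal{R})\ge \tfrac{c_5}{3}
\]
for all $n,k\in{\mathbb{N}}$, and the assertion follows with $c=c_5/3$.

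There is no genuine obstacle here; the entire content is carried by Theorem~\ref{thm2} together with \eqref{BB2}. The only aspect worth emphasizing is the mechanism behind the estimate. The exponential blow-up of the index $3n|K'|^{3k}$ in Theorem~\ref{thm2} would be damaging in the embedding regime, where the deterministic rate \eqref{BB1} is genuinely positive and a large index drives $e_m^{\rm det}(\mathcal{P}_1)$ toward zero. In the non-embedding case, however, $e_m^{\rm det}(\mathcal{P}_1)$ is \emph{flat} in $m$: arbitrarily many function values still cannot push the deterministic error below $c_5$. It is precisely this flatness, recorded in \eqref{BB2}, that makes the index inflation harmless and leaves a positive lower bound uniform in $n$ and $k$.
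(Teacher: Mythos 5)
Your proof is correct and is exactly the paper's argument: the paper states that Theorem~\ref{thm2} ``immediately gives'' the corollary, the implicit reasoning being precisely your combination of $e_{n,k}^{\rm ran}(\mathcal{P}_1,\mathcal{R})\ge \frac{1}{3}e^{\rm det}_{3n|K'|^{3k}}(\mathcal{P}_1)$ with the uniform lower bound $c_5$ from \eqref{BB2}, yielding $c=c_5/3$. Your additional remark about why the exponential index inflation is harmless in the non-embedding regime is accurate and a nice touch, but not needed for the proof itself.
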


It was shown in \cite{HNP04}, that if the embedding condition holds, then $(2 + d) \log_2 n$ random bits suffice to reach the rate of the unrestricted randomized setting, thus, if $\mathcal{R}$ is a bit restriction (see \eqref{D1}--\eqref{D2}), then there are constants $c_1,c_2>0$ such that for all $n\in{\mathbb{N}}$
\begin{equation}
\label{AB5}
c_1 n^{-r/d-1+1/\bar{p}}\le e_n^{\rm ran} (\mathcal{P}_1)\le  e_{n, (2 + d) \log_2 n}^{\rm ran} (\mathcal{P}_1,\mathcal{R})
\le c_2 n^{-r/d-1+1/\bar{p}}.
\end{equation}
The following consequence of  Theorem \ref{thm2} shows that the number of random bits used in the (non-adaptive) algorithm from \cite{HNP04} giving \eqref{AB5} is optimal up to a constant factor, also for adaptive algorithms.
\begin{corollary}\label{cor:2} Assume that the embedding condition holds and let 
	$\mathcal{R}$ be any finite access restriction. Then for each $\sigma$ with $0<\sigma\le 1-1/\bar{p}$ and each $c_0>0$ there are constants $c_1>0$, $c_2\in {\mathbb{R}}$ such that for all $n,k\in{\mathbb{N}}$ 
	$$
	e_{n,k}^{\rm ran} (\mathcal{P}_1,\mathcal{R})\le c_0n^{-r/d-\sigma}.
	$$
	implies 
	$$
	k\ge c_1\sigma\log_2 n+c_2.
	$$
\end{corollary}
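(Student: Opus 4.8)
The plan is to combine the lower bound of Theorem \ref{thm2} with the known deterministic rate \eqref{BB1} and then solve for $k$. Assume the embedding condition holds and fix $\sigma$ with $0<\sigma\le 1-1/\bar p$, together with $c_0>0$. Suppose $n,k\in{\mathbb{N}}$ satisfy $e_{n,k}^{\rm ran}(\mathcal P_1,\mathcal R)\le c_0 n^{-r/d-\sigma}$. Applying Theorem \ref{thm2} to $\mathcal P_1$ gives
\[
c_0 n^{-r/d-\sigma}\ge e_{n,k}^{\rm ran}(\mathcal P_1,\mathcal R)\ge \tfrac13\, e^{\rm det}_{3n|K'|^{3k}}(\mathcal P_1).
\]
By the deterministic lower bound in \eqref{BB1}, $e^{\rm det}_N(\mathcal P_1)\ge c_3 N^{-r/d}$ for every $N$, so with $N=3n|K'|^{3k}$ we obtain
\[
c_0 n^{-r/d-\sigma}\ge \tfrac{c_3}{3}\,\bigl(3n|K'|^{3k}\bigr)^{-r/d}.
\]

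The next step is purely algebraic: rearrange the inequality so that $|K'|^{3k}$ is isolated and then take logarithms. Dividing both sides and collecting the powers of $n$ yields $\bigl(3|K'|^{3k}\bigr)^{r/d}\ge c\, n^{\sigma}$ for an explicit constant $c$ depending only on $c_0,c_3$; equivalently $3^{r/d}|K'|^{3kr/d}\ge c\,n^{\sigma}$. Taking $\log_2$ gives a linear inequality of the form $3k\cdot\tfrac{r}{d}\log_2|K'|\ge \sigma\log_2 n + c'$ for a constant $c'\in{\mathbb{R}}$. Solving for $k$ produces
\[
k\ge \frac{d}{3r\log_2|K'|}\,\sigma\log_2 n + c_2 = c_1\sigma\log_2 n + c_2,
\]
with $c_1=\tfrac{d}{3r\log_2|K'|}>0$ and $c_2$ the resulting real constant. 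This is exactly the claimed bound.

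There is essentially no hard analytic step here; the corollary is a direct consequence of Theorem \ref{thm2} and the established deterministic rate. The one point deserving care is the role of $|K'|$: since $\mathcal R$ is a fixed finite access restriction we have $2\le|K'|<\infty$, so $\log_2|K'|$ is a fixed positive constant and division by it is legitimate, giving a genuine $c_1>0$. (If one wanted a single constant uniform over all finite restrictions this would fail, but the statement fixes $\mathcal R$ first, so the dependence of $c_1$ on $|K'|$ is allowed.) I would also note the assumption $\sigma\le 1-1/\bar p$ only guarantees that the hypothesis can be satisfiable in light of the randomized rate \eqref{AB4}; it is not actually needed for the implication itself, which holds for any $\sigma>0$. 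The main (minor) obstacle is merely bookkeeping of the constants through the logarithm to confirm that $c_2$ is a well-defined real number independent of $n$ and $k$.
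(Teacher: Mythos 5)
Your proposal is correct and follows essentially the same route as the paper: apply Theorem \ref{thm2}, insert the deterministic lower bound from \eqref{BB1} with $N=3n|K'|^{3k}$, take $\log_2$, and solve for $k$, arriving at the same constant $c_1=\frac{d}{3r\log_2|K'|}$. Your side remarks are also sound — the paper's argument likewise only uses $\sigma>0$ (the restriction $\sigma\le 1-1/\bar p$ just keeps the hypothesis achievable in view of \eqref{AB5}), and the paper, like you, implicitly divides by $\log_2|K'|$, i.e.\ treats the nondegenerate case $|K'|\ge 2$.
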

\begin{proof}
	Let $\mathcal{R}=\big((\Omega,\Sigma,{\mathbb P}),K',\Lambda'\big)$. By Theorem  \ref{thm2} and \eqref{BB1},
	\begin{eqnarray*}
		c_0n^{-r/d-\sigma}&\ge& e^{\rm ran}_{n,k}(\mathcal{P}_1,\mathcal{R}) \geq 3^{-1}  e^{\rm det}_{ 3n|K'|^{3k}}(\mathcal{P}_1)
		\ge 3^{-1}c_3(n|K'|^{3k})^{-r/d},	
	\end{eqnarray*}
	implying
	\begin{eqnarray*}
		\log_2 c_0-	\sigma\log_2 n&\ge& \log_2 (c_3/3)- \frac{3kr}{d}\log_2 |K'|,	
	\end{eqnarray*}
	thus, 
	\begin{eqnarray*}
		k&\ge& \frac{d}{3r\log_2 |K'|}(\sigma\log_2 n-\log_2 c_0+\log_2 (c_3/3)	).	
	\end{eqnarray*}
\end{proof}
\subsection {Integration of Lipschitz functions over the Wiener space}
Let $\mu$ be the Wiener measure on $C([0,1])$,
\begin{equation*}
F_2=\{f: C([0,1])\to {\mathbb{R}},\; |f(x)-f(y)|\le \|x-y\|_{ C([0,1])}\quad(x,y \in  C([0,1]))\},
\end{equation*}
$G={\mathbb{R}}$, let $I_2:F\to {\mathbb{R}}$ be the integration operator given by
$$
I_2f=\int_{C([0,1])} f(x)d\mu(x),
$$
and $\Lambda_2=\{\delta_x\colon x\in C([0,1])\}$,
so we consider the problem
$
\mathcal{P}_2=(F_2,{\mathbb{R}},I_2,{\mathbb{R}},\Lambda_2).
$
There exist constants $c_{1-4} >0$ such that 
\begin{equation}\label{eq1}
c_1  n^{-1/2} (\log_2 n)^{-3/2} \leq 
e^{{\rm ran}}_n(\mathcal{P}_2) \leq c_2  n^{-1/2} (\log_2 n)^{-1/2}
\end{equation}
and
\begin{equation}\label{eq3}
c_3  (\log_2 n)^{-1/2} \leq e^{{\rm det}}_n(\mathcal{P}_2)
\leq c_4  (\log_2 n)^{-1/2}
\end{equation}
for every $n \geq 2$,
see \cite{CDMGR09}, Theorem 1 and Proposition 3 for \eqref{eq3} and 
Theorems 11 and  12 for \eqref{eq1}. Moreover, it is shown in \cite{GHMR19b}, Theorem 8 and Remark 9, that if $\mathcal{R}$ is a bit restriction, then 
there exist a constants $c_1>0$, $c_2\in{\mathbb{N}}$ such that for all $n\in{\mathbb{N}}$ with $n\ge 3$
\begin{equation}\label{eq4}
e^{\rm ran}_{n,\kappa(n)}(\mathcal{P}_2,\mathcal{R}) \leq c_1  n^{-1/2} (\log_2 n)^{3/2},
\end{equation}
where
\begin{equation}
\label{E6}
\kappa(n) = c_2\lceil n  (\log_2 n)^{-1}  \log_2 (\log_2 n) \rceil.
\end{equation}
Our results imply that the number of random bits \eqref{E6} used in the algorithm of \cite{GHMR19b} giving the upper bound in \eqref{eq4}  is optimal (up to $\log$ terms) in the following sense. 
\begin{corollary}\label{cor:3}  Let $\mathcal{R}$ be a finite access restriction. For each $\alpha\in{\mathbb{R}}$ and each $c_0>0$ there are constants $c_1>0$ and $c_2\in{\mathbb{R}}$ such that for all $n,k\in{\mathbb{N}}$ with $n\ge 2$
	$$
	e_{n,k}^{\rm ran} (\mathcal{P}_2,\mathcal{R})\le c_0n^{-1/2}(\log_2 n)^\alpha.
	$$
	implies 
	\begin{equation}
	\label{E7}
	k\ge c_1n(\log_2 n)^{-2\alpha}+c_2.
	\end{equation}
\end{corollary}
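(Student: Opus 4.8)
The plan is to follow the same scheme as the proof of Corollary~\ref{cor:2}, combining Theorem~\ref{thm2} with the deterministic lower bound in \eqref{eq3} (in place of the power rate \eqref{BB1} used there). Writing $\mathcal R=\big((\Omega,\Sigma,{\mathbb P}),K',\Lambda'\big)$ and fixing $n,k\in{\mathbb{N}}$ with $n\ge 2$ for which the hypothesis $e_{n,k}^{\rm ran}(\mathcal P_2,\mathcal R)\le c_0 n^{-1/2}(\log_2 n)^\alpha$ holds, I would first apply Theorem~\ref{thm2} and the left inequality of \eqref{eq3}. Since $N:=3n|K'|^{3k}\ge 6\ge 2$, the deterministic bound is available at index $N$, and I obtain
\[
c_0 n^{-1/2}(\log_2 n)^\alpha\ \ge\ e_{n,k}^{\rm ran}(\mathcal P_2,\mathcal R)\ \ge\ \frac13\, e_{N}^{\rm det}(\mathcal P_2)\ \ge\ \frac{c_3}{3}\big(\log_2 N\big)^{-1/2}.
\]

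Next I would solve for $k$. Both outer sides being positive, taking reciprocals and squaring turns the chain into $\log_2 N\ge \frac{c_3^2}{9c_0^2}\,n(\log_2 n)^{-2\alpha}$. Expanding $\log_2 N=\log_2 3+\log_2 n+3k\log_2|K'|$ and rearranging gives
\[
k\ \ge\ \frac{1}{3\log_2|K'|}\left(\frac{c_3^2}{9c_0^2}\,n(\log_2 n)^{-2\alpha}-\log_2 n-\log_2 3\right),
\]
where, exactly as in Corollary~\ref{cor:2}, I use that a nontrivial finite restriction has $|K'|\ge 2$, so $\log_2|K'|>0$.

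The only point at which the argument genuinely differs from the clean cancellation occurring in Corollary~\ref{cor:2}, and hence the main (though minor) obstacle, is bringing this bound into the asserted form \eqref{E7}: the additive term $-\log_2 n$ does not cancel against the leading term. I would resolve this by noting that for every fixed $\alpha\in{\mathbb{R}}$,
\[
\frac{n(\log_2 n)^{-2\alpha}}{\log_2 n}=n(\log_2 n)^{-2\alpha-1}\longrightarrow\infty\quad(n\to\infty),
\]
so the leading term dominates $\log_2 n$. Writing $A=\frac{c_3^2}{27\,c_0^2\log_2|K'|}$ for the leading coefficient and setting $c_1=A/2$, I would observe that
\[
h(n):=\frac{A}{2}\,n(\log_2 n)^{-2\alpha}-\frac{\log_2 n+\log_2 3}{3\log_2|K'|}
\]
tends to $+\infty$ and is continuous on $[2,\infty)$, so that $c_2:=\inf_{n\ge 2}h(n)$ is finite. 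Since the lower bound for $k$ above equals $c_1 n(\log_2 n)^{-2\alpha}+h(n)\ge c_1 n(\log_2 n)^{-2\alpha}+c_2$, the implication \eqref{E7} follows with these $c_1>0$ and $c_2\in{\mathbb{R}}$.
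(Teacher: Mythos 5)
Your proof is correct and takes essentially the same approach as the paper: apply Theorem~\ref{thm2} together with the lower bound in \eqref{eq3}, square to get $\log_2\bigl(3n|K'|^{3k}\bigr)\ge \frac{c_3^2}{9c_0^2}n(\log_2 n)^{-2\alpha}$, and solve for $k$. The only (immaterial) difference is the final bookkeeping: the paper absorbs the $\log_2(3n)$ term by fixing $n_0$ with $\frac{c_3^2}{18c_0^2}n(\log_2 n)^{-2\alpha}\ge\log_2(3n)$ for $n\ge n_0$ and setting $c_2=-(3\log_2|K'|)^{-1}\log_2(3n_0)$, which is the same halve-the-leading-term device as your $c_2=\inf_{n\ge 2}h(n)$.
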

\begin{proof}
	Let $\mathcal{R}=\big((\Omega,\Sigma,{\mathbb P}),K',\Lambda'\big)$.	We use Theorem  \ref{thm2} again.  From \eqref{eq3} we obtain
	\begin{eqnarray*}
		c_0n^{-1/2}(\log_2 n)^\alpha&\ge& e^{\rm ran}_{n,k}(\mathcal{P}_2,\mathcal{R}) \geq 3^{-1}  e^{\rm det}_{ 3n|K'|^{3k}}(\mathcal{P}_2)
		\ge 3^{-1}c_3\log_2(3n|K'|^{3k})^{-1/2},	
	\end{eqnarray*}
	thus
	\begin{eqnarray*}
		\log_2(3n)+3k\log_2|K'|\ge \frac{c_3^2}{9c_0^2}n(\log_2 n)^{-2\alpha},
	\end{eqnarray*}
	which implies 
	\begin{eqnarray}
	k\ge (3\log_2|K'|)^{-1}\left(\frac{c_3^2}{9c_0^2}n(\log_2 n)^{-2\alpha}-\log_2(3n)\right).\label{F0}
	\end{eqnarray}
	Choosing $n_0\in{\mathbb{N}}$ in such a way that for $n\ge n_0$ 
	$$
	\frac{c_3^2}{18c_0^2}n(\log_2 n)^{-2\alpha}\ge \log_2(3n)
	$$
	leads to
	\begin{eqnarray*}
		k\ge (3\log_2|K'|)^{-1}\left(\frac{c_3^2}{18c_0^2}n(\log_2 n)^{-2\alpha}-\log_2(3n_0)\right).
	\end{eqnarray*}
\end{proof}

{\bf Acknowledgement}. The author thanks Mario Hefter and Klaus Ritter
for discussions on the subject of this paper.

\end{document}